\documentclass[12pt,reqno]{amsart} 

\usepackage[
text={440pt,575pt},
headheight=9pt,
centering
]{geometry}

\usepackage{hyperref}
\usepackage{xcolor}

\definecolor{darkred}{RGB}{160,0,0}
\definecolor{darkblue}{RGB}{0,0,160}
\hypersetup{
  colorlinks,
  citecolor=darkblue,
  filecolor=black,
  linkcolor=darkblue,
  urlcolor=darkblue
}

\usepackage{graphicx}				
\usepackage{amssymb}
\usepackage{mathtools,amsmath}
\usepackage{amsthm}
\usepackage{amssymb}
\usepackage{mathrsfs}
\usepackage{epstopdf}
\usepackage{color}
\usepackage{enumerate}
\usepackage[capitalise]{cleveref}
\usepackage{tikz-cd}

\usepackage[linesnumbered,ruled]{algorithm2e}
\RequirePackage{amsthm,amsmath,amsfonts,amssymb}
\usepackage[utf8]{inputenc}
\usepackage{chngcntr}
\usepackage{float}
\usepackage{xspace}

\theoremstyle{plain}
\newtheorem{prop}{Proposition}[section]

\newtheorem{cor}[prop]{Corollary}
\newtheorem{lem}[prop]{Lemma}

\theoremstyle{definition}
\newtheorem{dfn}[prop]{Definition}
\newtheorem{rem}[prop]{Remark}
\newtheorem{example}[prop]{Example}


\newcommand{\C}{{\mathbb{C}}}

\newcommand{\A}{{\mathbb{A}}}

\newcommand{\Q}{{\mathbb{Q}}}
\newcommand{\N}{{\mathbb{N}}}
\newcommand{\Z}{{\mathbb{Z}}}

\DeclareMathOperator{\GL}{GL}
\DeclareMathOperator{\Sym}{Sym}
\newcommand{\vc}{\mathcal{V}}

\renewcommand{\subset}{\subseteq}

\newcommand{\cc}{change of coordinates\xspace}
\newcommand{\gb}{Gr\"obner basis\xspace}
\newcommand{\ideal}[1]{\left\langle #1 \right\rangle}

\renewcommand{\setminus}{\smallsetminus}
\renewcommand{\epsilon}{\varepsilon}
\renewcommand{\theta}{\vartheta}

\newcommand{\todfn}[1]{\emph{#1}}

\newcommand\scalemath[2]{\scalebox{#1}{\mbox{\ensuremath{\displaystyle #2}}}}

\allowdisplaybreaks  

\newcommand{\g}{\mathfrak{g}}
\renewcommand{\c}{\mathfrak{c}}
\renewcommand{\t}{\mathfrak{t}}
\newcommand{\n}{\mathfrak{n}}
\newcommand{\ad}{\mathrm{ad}}

\usepackage{todonotes}
\setlength{\marginparwidth}{2.3cm}

\let\oldnl\nl
\newcommand{\nonl}{\renewcommand{\nl}{\let\nl\oldnl}}

\newcommand{\graphedgestyle}[1]{\tikzset{#1/.style={draw=black}}}
\NewDocumentCommand{\graph}{m >{\SplitList{,}}m O{scale=0.25}}{%
\begin{tikzpicture}[
  anchor = base, baseline,
  Eij/.style = {draw=none},
  Eik/.style = {draw=none},
  Eil/.style = {draw=none},
  Ejk/.style = {draw=none},
  Ejl/.style = {draw=none},
  Ekl/.style = {draw=none},
  #3
]

\tikzset{every node/.style={draw,shape=circle,fill=black,minimum size=2pt,inner sep=0}}
\tikzset{every edge/.style={line width=1pt}}
\ProcessList{#2}{\graphedgestyle}

\ifthenelse{#1 > 1}{
\node (Ni) at (0, 1) {};
\node (Nj) at (1, 1) {};

\path (Ni) edge [Eij] (Nj);
}{}

\ifthenelse{#1 > 2}{
\node (Nk) at (1, 0) {};

\path (Ni) edge [Eik] (Nk);
\path (Nj) edge [Ejk] (Nk);
}{}

\ifthenelse{#1 > 3}{
\node (Nl) at (0, 0) {};

\path (Ni) edge [Eil] (Nl);
\path (Nj) edge [Ejl] (Nl);
\path (Nk) edge [Ekl] (Nl);
}{}

\end{tikzpicture}
}


\title[Deciding toricness effectively] {Efficiently deciding if an
  ideal is toric after a linear coordinate change}

\author{Thomas Kahle \and Julian Vill}
\date{\today}


\makeatletter
\@namedef{subjclassname@2020}{\textup{2020} Math. Subj. Classification}
\makeatother
\subjclass[2020]{Primary: 13F65; Secondary: 13P10, 13P25, 14M25, 62R01}


\begin{document}

\begin{abstract}
  We propose an effective algorithm that decides if a prime ideal in a
  polynomial ring over the complex numbers can be transformed into a
  toric ideal by a linear automorphism of the ambient space.  If this
  is the case, the algorithm computes such a transformation
  explicitly.  The algorithm can compute that all Gaussian graphical
  models on five vertices that are not initially toric cannot be made
  toric by any linear~\cc. The same holds for all Gaussian conditional
  independence ideals of undirected graphs on six vertices.
\end{abstract}

\maketitle

\section{Introduction}

Let $k$ be a field.  A \emph{binomial} in the polynomial ring
$k[x_{1},\dotsc, x_{n}]$ is an element of the form
$x^{a} - \lambda x^{b}$, where $\lambda \in k$, $a,b\in\Z_+^n$.  A
binomial is \emph{unital} if $\lambda = 0$ or $\lambda = 1$ and hence
monomials are also (unital) binomials.  An ideal
$I\subset k[x_{1},\dotsc, x_{n}]$ is \emph{toric} if it is both prime
and can be generated by unital binomials.
Toric ideals are a mainstay of combinatorial commutative algebra
because they present affine semigroup
algebras~\cite[Chapter~10]{MillerSturmfels}.  They are also the
building blocks of toric geometry from which they have their
name~\cite[Prop.~1.1.9]{CoxLittleSchenk}.  An affine toric variety is
the Zariski-closure of a torus orbit of a point.  The coordinate rings
of such varieties can be presented by toric ideals.  Our aim here is
to detect the torus structure using methods from Lie theory.

Toric ideals and varieties frequently appear in applications.  For
example the fundamental theorem of Markov bases is a cornerstone of
algebraic statistics and connects generators of toric ideals to Markov
chains used in hypothesis testing~\cite[Theorem~3.1]{diaconissturmfels98}.
Algebraic statistics also provides ideals which are toric, but not
naturally given in the corresponding coordinates.  One can sometimes
find special coordinate changes related to the discrete Fourier transform
that reveal toric structure~\cite{gmn2022,sturmfels2019brownian}.
Conversely, it can also be useful and hard to clarify that a given
rational variety does \emph{not} admit a toric structure.  The first
example of a Bayesian network with this property appears
in~\cite{nicklasson2023}.

In total, it is desirable to test efficiently if a given ideal can be
made a toric ideal by applying a linear coordinate change on the
ambient space or equivalently on the variables $x_{1},\dotsc, x_{n}$
in the polynomial ring.  Of course one would also like to see if, more
generally, any automorphism of ambient affine space makes a variety
toric, but automorphisms of affine space are not well understood.
Hence in this paper, ``\cc'' always means such a linear \cc.

While toric ideal theory is essentially independent of~$k$, we
restrict to $k=\C$ here because the Lie theory machinery that we
employ also has this requirement.  Actual computations can almost
always be done over $\Q$ or number fields, extending the field step by
step whenever this is required for some computation to have a result.

General algorithms to decide if an ideal can be made toric after a \cc
have been given in~\cite{kmm19}.  It shows that this and various
related problems are all amenable to the idea to compute the
(constructible) locus in $\GL_{n}(\C)$, where some property of
interest holds, for example the property of being cut out by
binomials.  The generality of this approach comes at a price in
computational complexity.  To test for toricness after a \cc with this
method requires the computation of comprehensive Gröbner bases and is
hence infeasible in all but the simplest examples.

Recently, a simple criterion to prove that a variety is not toric
after any \cc appeared in~\cite{mp2023}.  The new idea is to consider
the Lie algebra of the subgroup of $\GL_n(\C)$ that fixes the ideal of
the variety (under the natural action that is defined precisely in
Section~\ref{sec:algo}).  We build on this idea and complete it to an
algorithmic test.  In some situations, this group can be determined
explicitly.  The recent preprint \cite{ghl2024} does this for special
varieties including secant varieties.  These methods, however, require
that the variety under consideration is well-understood.

Our approach to the problem is as follows.  If a variety is toric,
there exists a torus of the same dimension acting on the variety with
a dense orbit.  The criterion of~\cite{mp2023} exploits this to
conclude that, if the locus in $\GL_n(\C)$ which acts on the variety
$\vc(I)$ has dimension strictly smaller than $\vc(I)$, there cannot
exist such a torus.  But the subgroup acting on $\vc(I)$ is a Lie
group itself.  Therefore we search for maximal tori in that Lie group.
This can be done in the corresponding Lie algebra and thus eventually
reduces to linear algebra.  Specifically, we try to find a subalgebra
that is simultaneously diagonalizable.  A coordinate change that
diagonalizes this subalgebra puts $I$ in binomial form if this is
possible at all.  In order to find a simultaneously diagonalizable
subalgebra we first find a Cartan subalgebra which is easy.  Inside
that, finding (as a direct summand) a simultaneously diagonalizable
subalgebra is essentially only Jordan decomposition.

Methods based on Lie algebras of varieties are also applied in 
cryp\-to\-gra\-phy, where proposed cryptographic protocols rely on the 
hardness of finding a linear projective equivalence between two given
projective varieties $X, X'$.  Often one of them is a nice variety
with a large Lie algebra, e.g.~a Veronese embedding of projective space.
This related problem has been 
approached by considering isomorphisms between the two Lie
algebras of $X$ and $X'$ by de Graaf et al.~\cite{deGraaf2006Lie}.
Adjusting these methods to work in finite characteristic 
can yield polynomial time attacks on said protocols~\cite{Castryck25}.

In Section~\ref{sec:lie} we briefly review relevant definitions of Lie
theory.  In Section~\ref{sec:algo} we give our
Algorithm~\ref{alg:decideToric}, discuss its validity and present
several examples from the literature.  We performed some challenging
computations and report on our experiences in \cref{sec:details}.
While the development here is written for homogeneous ideals, a simple
modification extends it to arbitrary ideals.  See
Section~\ref{sec:nonhomog}.  Finally, we present some examples from
algebraic statistics in Section~\ref{sec:gaussGM}.

\section{Lie groups and Lie algebras}
\label{sec:lie}

We briefly review some Lie theory as it pertains to subgroups of
$\GL_{n}(\C)$ that fix given ideals.  Such groups should be kept in
mind in the following.  All facts that are stated without proof here
are to be found in textbooks like \cite{borel1991}.

Let $G\subset\GL_n(\C)$ be any closed linear algebraic group.  Then
$G$ is a smooth subvariety of $\A^{n^2}$.  Let $G^0$ be its identity
component, the unique irreducible component containing the identity
matrix.  This is a normal subgroup of $G$ and $G/G^0$ is finite.
Moreover, all finitely many irreducible components are translates
$gG^0$ for some $g\in G$.  Therefore, $\dim G=\dim G^0$ (as affine
varieties or smooth manifolds).  A torus $T\subset G$ is a subgroup
that is simultaneously diagonalizable and thus has character lattice
$\Z^{k}$ and is homeomorphic to $(\C^\star)^k$ for some $k\in\N$.
In particular, by connectedness (or algebraically irreducibility),
each torus is contained in the identity component~$G^0$.  Hence, in
order to find a maximal torus in $G$ it suffices to consider~$G^0$.
Replacing $G$ with $G^0$ we may assume that $G$ is connected (or
irreducible) when searching for algebraic tori.

Let $M_{n}(\C)$ be the space of complex $n\times n$-matrices and
$\g\subset M_{n}(\C)$ the Lie algebra of~$G$.  This is a finite
dimensional algebra, equipped with the Lie bracket multiplication
$\g\times \g \to \g$ given by the commutator $[A,B] = AB - BA$.  As a
vector space, $\g$ is isomorphic to the tangent space of $G$ at the
identity matrix.  If $T\subset G$ is a torus, the Lie algebra $\t$
of~$T$ is a subalgebra of $\g$ that is simultaneously
diagonalizable. Conversely, any simultaneously diagonalizable
subalgebra of $\g$ is the Lie algebra of a torus in~$G$.  Let $\g$ be
the (complex) Lie algebra of a connected linear algebraic
group~$G$. For any $x\in\g$ the adjoint, $\ad(x) \colon \g\to\g$, is
the endomorphism of $\g$ given by $y \mapsto [x,y]$.

Our eventual goal is to compute a maximal torus inside a group~$G$. If
$G$ is semisimple or reductive, maximal tori of $G$ are in bijection
with Cartan subgroups which we define next.  However, if the group is
not reductive, Cartan subgroups can strictly contain the unique
maximal torus.

\begin{dfn}
\label{dfn:cartan_algebra}
A subalgebra $\c\subset\g$ is a \todfn{Cartan subalgebra} of $\g$ if
$\c$ is nilpotent and self-normalizing, i.e.\ it satisfies the
following two conditions:
\begin{enumerate}
\item there exists $n\in\N$ such that for any $x\in\c$ the map
  $\ad(x)^n$ is zero, and
\item $\c=\{x\in\g\colon [x,\c]\subset \c\}$.
\end{enumerate}
\end{dfn}

\begin{dfn}
  A subgroup $C\subset G$ is a \todfn{Cartan subgroup} of $G$ if it is
  the centralizer of a maximal torus of $G$.
\end{dfn}


The reason we consider Cartan groups and Cartan algebras is the
following fact from \cite[Chapter~IV 12.1]{borel1991}, which
eventually allows us to find maximal tori in~$G$.
\begin{prop}
\label{prop:cartan_group_decomposition}
Let $C$ be a Cartan subgroup of $G$, then there is a decomposition
$C=C_s\times C_n$ where $C_s$ is the (closed) subgroup consisting of
all semi-simple elements and $C_n$ is the (closed) subgroup consisting
of all unipotent elements of~$C$. Additionally, $C_s$ is a maximal
torus in~$G$.
\end{prop}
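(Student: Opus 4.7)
The plan is to reduce the statement to the structure theory of connected nilpotent linear algebraic groups. By definition, $C = Z_G(T)$ for some maximal torus $T \subset G$. The first step I would take is to show that $C$ is nilpotent: its Lie algebra $\c$ is a Cartan subalgebra of $\g$ (this is the infinitesimal translation of ``centralizer of a maximal torus'' in the connected setting), and Cartan subalgebras are nilpotent by Definition~\ref{dfn:cartan_algebra}. Nilpotency of $\c$ transfers to the connected group $C$ via standard correspondence between Lie algebras and connected algebraic groups in characteristic zero.

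Next, I would invoke the structure theorem for connected nilpotent linear algebraic groups $H$: the subsets $H_s$ of semisimple elements and $H_u$ of unipotent elements are closed subgroups, and $H = H_s \times H_u$ as an internal direct product. The idea of this step is to use Jordan decomposition in $\GL_n(\C)$: every $g \in H$ decomposes uniquely as $g = g_s g_u = g_u g_s$ with $g_s$ semisimple, $g_u$ unipotent, and both factors lie in $H$ since $H$ is a closed algebraic subgroup and the Jordan decomposition is intrinsic to any such group. The remaining point is to verify that $H_s$ and $H_u$ are each closed under multiplication, for which one argues that in a nilpotent algebraic group any two semisimple elements commute (using that semisimple elements lie in a common torus, a consequence of nilpotency and rigidity of tori) and similarly for unipotent elements. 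Applying this with $H = C$ yields the claimed decomposition $C = C_s \times C_n$.

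Finally, I would identify $C_s$ with a maximal torus. Since $C_s$ is a closed connected commutative subgroup of $\GL_n(\C)$ all of whose elements are semisimple, it is simultaneously diagonalizable, hence a torus. The maximal torus $T$ used to define $C$ is contained in $C$, and every element of $T$ is semisimple, so $T \subset C_s$. Maximality of $T$ among tori in $G$ forces $T = C_s$, which both identifies $C_s$ as a torus and shows it is maximal.

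The main obstacle is the structure theorem for connected nilpotent algebraic groups, specifically the commutativity of semisimple elements within a nilpotent group; everything else is bookkeeping with Jordan decomposition and the definition of Cartan subgroup. Since this is the classical result cited in~\cite[Chapter~IV 12.1]{borel1991}, I would in practice appeal to it directly rather than reproving it.
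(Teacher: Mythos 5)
The paper offers no proof of its own here---it simply cites Borel, Chapter~IV, 12.1---and your proposal ultimately falls back on exactly that citation, so the approaches coincide. Your sketch of the underlying classical argument (nilpotency of $C=Z_G(T)$, the semisimple/unipotent decomposition of a connected nilpotent linear algebraic group, and $T=C_s$ by maximality of $T$) is the standard one and is sound, modulo the points you explicitly delegate to the reference, chiefly the connectedness of $Z_G(T)$ when $G$ is connected and the commutativity (indeed centrality) of semisimple elements in a connected nilpotent group; note also that your route to nilpotency of $C$ via the Cartan subalgebra correspondence quietly uses the Borel--Springer result the paper only states afterwards, whereas Borel argues directly on the group.
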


As one would hope, Cartan groups and Cartan algebras are nicely
related in our case of a connected linear algebraic group. For more
general groups, the correspondence of Cartan groups and Cartan
algebras is not true.

\begin{prop}[{\cite[Proposition~6.6]{bs1968}}]
  Cartan subalgebras of $\g$ are exactly the Lie algebras of Cartan
  subgroups of $G$.
\end{prop}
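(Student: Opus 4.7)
The plan is to establish the bijection between Cartan subalgebras of $\g$ and Lie algebras of Cartan subgroups of $G$ by proving each direction separately, using the Jordan decomposition at the group and algebra level as the bridge. Throughout we rely on the standard fact that for a connected linear algebraic group in characteristic zero the correspondence $H \mapsto \mathrm{Lie}(H)$ is an inclusion-preserving bijection between connected closed subgroups and algebraic Lie subalgebras of $\g$, and that it intertwines normalizers and centralizers.

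For the forward direction, let $C = Z_G(T) \subset G$ be a Cartan subgroup with $T$ a maximal torus, and set $\mathfrak{c} = \mathrm{Lie}(C)$. Applying \cref{prop:cartan_group_decomposition} gives $C = C_s \times C_n$ with $C_s = T$ a torus and $C_n$ unipotent. Differentiating yields $\mathfrak{c} = \t \oplus \n$ as Lie algebras, where $\t$ is abelian (hence nilpotent) and $\n$ is the Lie algebra of a unipotent group, therefore consists of nilpotent operators and is nilpotent by Engel's theorem. Since $T$ acts trivially on $C_n$ by conjugation (it centralizes $C$ by definition), one gets $[\t, \n] = 0$, so $\mathfrak{c}$ is itself nilpotent, verifying condition~(i) of \cref{dfn:cartan_algebra}. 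For self-normalization (condition~(ii)), the normalizer $\n_\g(\mathfrak{c})$ is the Lie algebra of the identity component of $N_G(C^0)$, so it suffices to show $N_G(C^0)^0 = C^0$. This is the standard statement that a Cartan subgroup equals the normalizer of its own identity component.

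For the reverse direction, start with a Cartan subalgebra $\mathfrak{c} \subset \g$. Since $\mathfrak{c}$ is nilpotent and algebraic, the Jordan decomposition $x = x_s + x_n$ restricts to $\mathfrak{c}$ and the semisimple parts span a toral subalgebra $\mathfrak{c}_s \subset \mathfrak{c}$ consisting of simultaneously diagonalizable elements. By the correspondence discussed in \cref{sec:lie}, $\mathfrak{c}_s$ is the Lie algebra of a torus $T \subset G$. The key step is to show that $\mathfrak{c} = \mathrm{Lie}(Z_G(T))$ and that $T$ is maximal in~$G$. For the first equality, $[\mathfrak{c}_s, \mathfrak{c}] \subset \mathfrak{c}$ combined with semisimplicity of $\ad \mathfrak{c}_s$ forces $\mathfrak{c}$ to centralize $\mathfrak{c}_s$ (otherwise one could enlarge $\mathfrak{c}$ using a nonzero weight vector, contradicting self-normalization). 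This gives $\mathfrak{c} \subset \mathrm{Lie}(Z_G(T))$. The reverse inclusion uses that $\mathrm{Lie}(Z_G(T))$ is itself nilpotent with $\mathfrak{c}$ a self-normalizing subalgebra, which forces equality. Maximality of~$T$ then follows because any strictly larger torus would enlarge $\mathfrak{c}_s$, contradicting its construction as the set of semisimple parts.

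The main obstacle is the second direction: specifically, verifying the delicate interplay $\mathfrak{c} = \mathrm{Lie}(Z_G(T))$ and the maximality of the torus $T$ arising from the semisimple parts of $\mathfrak{c}$. This requires genuine input from the structure theory of algebraic groups, not merely the formal Jordan decomposition, because it links an intrinsically Lie-algebraic notion (self-normalization of a nilpotent subalgebra) with an intrinsically group-theoretic one (being the centralizer of a maximal torus). The forward direction is comparatively routine once \cref{prop:cartan_group_decomposition} is in hand, since it reduces to differentiating a product decomposition and invoking the standard dictionary between subgroups and subalgebras.
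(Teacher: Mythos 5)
The paper does not prove this statement; it is quoted directly from Borel and Springer, so there is no in-paper argument to compare your attempt against. Evaluated on its own, your forward direction is essentially sound: you use \cref{prop:cartan_group_decomposition} plus the fact that $T$ is central in $C = Z_G(T)$ to get $[\t,\n]=0$ and nilpotence, and you reduce self-normalization to $N_G(C^0)^0 = C^0$, which is a correct (if nontrivial) fact about Cartan subgroups that deserves a citation.

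The reverse direction, however, contains a genuine gap. You write that $\c$ is ``nilpotent and algebraic'' so that Jordan parts stay in $\c$, but algebraicity of $\c$ is not given and is part of what must be established. (It does follow from self-normalization, since $\c=\n_\g(\c)$ and the normalizer of a subspace is the Lie algebra of a closed subgroup, but you have to say this.) You also implicitly invoke the decomposition $\c=\t\oplus\n$ of \cref{cor:cartan_algebra_decomposition}, which the paper derives \emph{from} the proposition you are proving, so quoting it would be circular. Most seriously, your argument for $\c=\mathrm{Lie}(Z_G(T))$ uses that ``$\mathrm{Lie}(Z_G(T))$ is itself nilpotent,'' but the centralizer of a torus is nilpotent precisely when that torus is already maximal, which is the thing you still need to show; and the maximality argument (``any strictly larger torus would enlarge $\c_s$'') does not follow, since $\c_s$ was defined intrinsically from $\c$ and is unaffected by choosing a larger ambient torus. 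A cleaner route, closer to the machinery the paper already cites in \cref{prop:cartan_algebra_kernel}, is to take a generic $x\in\c$ with $\c=\ker(\ad(x)^{\dim\g})$, write $x=x_s+x_n$ in $\g$, and observe that the commuting Jordan parts give $\ker(\ad(x)^{\dim\g})=\ker\ad(x_s)=\z_\g(T_1)$ for the torus $T_1$ generated by $x_s$; then $\c=\mathrm{Lie}(Z_G(T_1))$ is nilpotent and algebraic by construction, $Z_G(T_1)^0$ is a connected nilpotent group whose unique maximal torus $T$ is central, hence $Z_G(T)=Z_G(T_1)$ and one checks $T$ is maximal in $G$. This avoids both the circularity and the unjustified algebraicity assumption.
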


This allows us to also get the decomposition on the Lie algebra side.

\begin{cor}
\label{cor:cartan_algebra_decomposition}
Let $\c\subset\g$ be a Cartan subalgebra. Then there is a
decomposition $\c=\t\oplus\n$ in which $\t$ consists of all
diagonalizable elements of $\c$ and $\n$ consists of all nilpotent
elements of $\c$. Furthermore, the \todfn{toral subalgebra} $\t$ is
simultaneously diagonalizable.
\end{cor}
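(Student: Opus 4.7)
My plan is to derive the corollary by passing to the Cartan subgroup side, applying \cref{prop:cartan_group_decomposition}, and then transporting the decomposition back to the Lie algebra.

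First, let $C \subset G$ be the Cartan subgroup whose Lie algebra is $\c$; such a $C$ exists by the preceding proposition. Apply \cref{prop:cartan_group_decomposition} to write $C = C_s \times C_n$ with $C_s$ a maximal torus and $C_n$ unipotent. Since this is a direct product of closed subgroups, taking Lie algebras yields a vector space decomposition $\c = \t \oplus \n$, where I set $\t := \mathrm{Lie}(C_s)$ and $\n := \mathrm{Lie}(C_n)$. By Section \ref{sec:lie}, $\t$ is simultaneously diagonalizable because it is the Lie algebra of a torus, and $\n$ consists of nilpotent matrices because it is the Lie algebra of a unipotent group (via the logarithm / exponential correspondence on a unipotent group).

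Next I need to verify the ``consisting of all'' statement, i.e.\ that $\t$ captures every diagonalizable element of $\c$ and $\n$ captures every nilpotent element. Because $C_s$ and $C_n$ commute in $G$, their Lie algebras commute elementwise, so any $x \in \c$ can be written as $x = s + n$ with $s \in \t$, $n \in \n$, $s$ diagonalizable, $n$ nilpotent, and $[s,n]=0$. This is precisely the additive Jordan--Chevalley decomposition of $x$ in $M_n(\C)$, which is unique. Hence if $x$ is diagonalizable, uniqueness forces $n = 0$ and $x \in \t$; if $x$ is nilpotent, uniqueness forces $s = 0$ and $x \in \n$.

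The step I expect to be the main obstacle is the passage from the group-level direct product $C = C_s \times C_n$ to the Lie algebra direct sum and, more importantly, the identification of $\t$ with \emph{all} diagonalizable elements of $\c$ rather than merely a simultaneously diagonalizable subalgebra. The cleanest way to handle this, as above, is to invoke uniqueness of Jordan decomposition in $M_n(\C)$, which requires that the candidate semisimple and nilpotent parts commute --- a fact that ultimately rests on the direct product structure supplied by \cref{prop:cartan_group_decomposition}.
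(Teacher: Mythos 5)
Your proof is correct and takes essentially the same route the paper intends: the corollary is stated there as a direct consequence of \cref{prop:cartan_group_decomposition} together with the correspondence between Cartan subgroups and Cartan subalgebras, which is exactly your derivation. Your use of the uniqueness of the Jordan--Chevalley decomposition (justified by the commuting of $\t$ and $\n$ coming from the direct product $C_s\times C_n$) simply fills in the details the paper leaves implicit for the ``consists of all'' claims.
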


In order to compute Cartan subalgebras of $\g$ we use the following
fact.

\begin{prop}[{\cite[Lemma~6.2 and Proposition~6.7]{bs1968}}]
\label{prop:cartan_algebra_kernel}
Let $x\in\g$ be generic. Then $\ker(\ad(x)^{\dim\g})$ is a Cartan
subalgebra of $\g$.  Any Cartan subalgebra has this form.
\end{prop}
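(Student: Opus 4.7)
The plan is to work with the generalized eigenspace (Fitting) decomposition of $\mathfrak{g}$ under $\ad(x)$. For each $x \in \mathfrak{g}$, write
\[
\mathfrak{g} = \bigoplus_{\alpha \in \C} \mathfrak{g}_\alpha(x), \qquad \mathfrak{g}_\alpha(x) = \ker\bigl((\ad(x) - \alpha\cdot\mathrm{Id})^{\dim \mathfrak{g}}\bigr),
\]
and set $\mathfrak{c}(x) := \mathfrak{g}_0(x)$ and $\mathfrak{g}_{\neq 0}(x) := \bigoplus_{\alpha\neq 0}\mathfrak{g}_\alpha(x)$. A standard binomial identity for $(\ad(x) - (\alpha+\beta)\cdot\mathrm{Id})^N[y,z]$ yields $[\mathfrak{g}_\alpha(x), \mathfrak{g}_\beta(x)] \subseteq \mathfrak{g}_{\alpha+\beta}(x)$, so $\mathfrak{c}(x)$ is a subalgebra and $\mathfrak{g}_{\neq 0}(x)$ is preserved by $\ad(z)$ for every $z \in \mathfrak{c}(x)$. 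The precise meaning of \emph{generic} is that $x$ is \emph{regular}, i.e.\ $\dim\mathfrak{g}_0(x)$ attains the minimum $r := \min_{z\in\mathfrak{g}} \dim\mathfrak{g}_0(z)$. The regular locus is Zariski-open dense: in the monic characteristic polynomial $\det(t\cdot\mathrm{Id}-\ad(x)) = \sum_k c_k(x)\,t^k$ the coefficients $c_0,\dotsc,c_{r-1}$ vanish identically while $c_r$ does not, and $\{x : c_r(x) \neq 0\}$ is exactly the regular locus.

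The main obstacle is showing nilpotency of $\mathfrak{c} := \mathfrak{c}(x)$ for regular $x$, since the other axioms are routine. By Engel's theorem it suffices to prove that $\ad(y)|_\mathfrak{c}$ is nilpotent for every $y \in \mathfrak{c}$. I would consider the nonempty Zariski-open subset $U = \{z \in \mathfrak{c} : \ad(z)|_{\mathfrak{g}_{\neq 0}(x)} \text{ is invertible}\}$, which contains $x$. For $z \in U$, decomposing any $w \in \mathfrak{g}_0(z)$ along $\mathfrak{g} = \mathfrak{c} \oplus \mathfrak{g}_{\neq 0}(x)$ and using $\ad(z)$-invariance of both summands forces $w \in \mathfrak{c}$, so $\mathfrak{g}_0(z) \subseteq \mathfrak{c}$. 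If in addition $z$ is regular in $\mathfrak{g}$, then $\dim\mathfrak{g}_0(z) = r = \dim\mathfrak{c}$ forces $\mathfrak{g}_0(z) = \mathfrak{c}$, so $\ad(z)|_\mathfrak{c}$ is nilpotent. The set $\{z \in \mathfrak{c} : \ad(z)|_\mathfrak{c} \text{ is nilpotent}\}$ is Zariski-closed (cut out by the lower coefficients of $\det(t\cdot\mathrm{Id}-\ad(z)|_\mathfrak{c})$) and contains the Zariski-dense intersection of $U$ with the regular locus, hence equals all of $\mathfrak{c}$; Engel's theorem then gives nilpotency of $\mathfrak{c}$ as a Lie algebra.

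Self-normalization is easy: if $[y,\mathfrak{c}] \subseteq \mathfrak{c}$, decompose $y = \sum_\alpha y_\alpha$ along the Fitting decomposition. Then $\ad(x)(y) = [x,y] \in \mathfrak{c} = \mathfrak{g}_0(x)$. Since $\ad(x)$ restricts to an automorphism of each $\mathfrak{g}_\alpha(x)$ with $\alpha \neq 0$ (being the sum of the invertible scalar $\alpha$ and a commuting nilpotent), the components $y_\alpha$ for $\alpha\neq 0$ must vanish, giving $y \in \mathfrak{c}$.

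For the converse direction, any Cartan subalgebra $\mathfrak{c}'$ is nilpotent, so every $z \in \mathfrak{c}'$ satisfies $\mathfrak{c}' \subseteq \mathfrak{g}_0(z)$; moreover, for generic $z \in \mathfrak{c}'$, the self-normalization of $\mathfrak{c}'$ forces $\ad(z)$ to act invertibly on $\mathfrak{g}/\mathfrak{c}'$, so $\mathfrak{g}_0(z) = \mathfrak{c}'$. Invoking the classical fact that all Cartan subalgebras of $\mathfrak{g}$ have the same dimension, namely $r$, such a $z$ is regular in $\mathfrak{g}$, and $\mathfrak{c}' = \ker(\ad(z)^{\dim\mathfrak{g}})$ as claimed.
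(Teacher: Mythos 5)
The paper itself does not prove this proposition: it cites it directly from Borel--Springer \cite[Lemma~6.2, Prop.~6.7]{bs1968}, so there is no in-paper argument to compare against. Your proposal reproduces the classical regular-element proof and it is essentially correct. The Fitting decomposition, the bracket inclusion $[\mathfrak{g}_\alpha(x),\mathfrak{g}_\beta(x)]\subseteq\mathfrak{g}_{\alpha+\beta}(x)$, the description of the regular locus as $\{c_r\neq 0\}$ where $c_r$ is the first non-identically-vanishing coefficient of $\det(t\cdot\mathrm{Id}-\ad(x))$, the Zariski-density argument for nilpotency of $\mathfrak{c}=\mathfrak{g}_0(x)$ (pushing nilpotency of $\ad(z)|_\mathfrak{c}$ from a dense open set of regular $z\in\mathfrak{c}$ to all of $\mathfrak{c}$, then invoking Engel), and the self-normalization argument using invertibility of $\ad(x)$ on $\mathfrak{g}_{\neq 0}(x)$ are all correct. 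Two small points in the converse direction are worth spelling out if you write this up in full: the phrase ``self-normalization of $\mathfrak{c}'$ forces $\ad(z)$ to act invertibly on $\mathfrak{g}/\mathfrak{c}'$'' compresses an Engel-type argument --- one should take the weight decomposition of $\mathfrak{g}$ as a module over the nilpotent algebra $\mathfrak{c}'$, note $\mathfrak{c}'\subseteq\mathfrak{g}^0$, and observe that if $\mathfrak{g}^0\supsetneq\mathfrak{c}'$ then Engel on $\mathfrak{g}^0/\mathfrak{c}'$ produces an element normalizing $\mathfrak{c}'$ but lying outside it. Also, your appeal to ``all Cartan subalgebras of $\mathfrak{g}$ have the same dimension $r$'' is precisely the conjugacy theorem that the paper cites separately (\cref{prop:conjugate}), so the converse direction of your proof is not self-contained but this dependency is consistent with the paper's own citations.
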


\begin{rem}
  By \cite[\S~6]{bs1968}, $\ker(\ad(x)^{\dim\g})$ is a Cartan
  subalgebra if and only if this kernel has the minimal possible
  dimension over all such kernels for $x\in\g$. Since $\ad(x)$ is an endomorphism of the finite dimensional vector space $\g$, it can be represented by a matrix. The kernel having minimal dimension is thus equivalent to the non-vanishing of minors of powers of this matrix which shows that the set of such $x$ is indeed Zariski open.
   However, since we do not
  know the minimal value a priori, the two conditions in
  \cref{dfn:cartan_algebra} lend itself better to computations.
\end{rem}

Maximal tori and Cartan subgroups in $G$ are nicely behaved in the
following way.
\begin{prop}[{\cite[Corollary 11.3.(1)]{borel1991}}]
\label{prop:conjugate}
  All maximal tori in $G$ are conjugate and have the same dimension,
  and the same statement holds for all Cartan subgroups of~$G$.
\end{prop}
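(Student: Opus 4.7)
The plan is to prove the conjugacy statement for maximal tori first, and then derive the statement for Cartan subgroups as a formal consequence; equality of dimensions is automatic since conjugation is an isomorphism of algebraic varieties. Since $G$ is a connected complex linear algebraic group, I would work through Borel subgroups, i.e.\ maximal connected solvable closed subgroups, together with Borel's fixed point theorem: any connected solvable algebraic group acting on a complete variety admits a fixed point.

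The first main step is to show that any two Borel subgroups $B_1,B_2$ of $G$ are conjugate. The homogeneous space $G/B_2$ is a projective variety (this itself requires that $B_2$ is parabolic, which is part of the Borel theory), so the action of $B_1$ by left multiplication has a fixed coset $gB_2$ by Borel's fixed point theorem. This yields $g^{-1}B_1 g \subseteq B_2$, and maximality of $B_1$ forces equality. The second main step is the conjugacy of maximal tori inside a single connected solvable group $B$; this I would prove by induction on $\dim B$, splitting off the unipotent radical $B_u$ (on which the torus acts) and using the fact that $B/B_u$ is itself a torus, so maximal tori map isomorphically.

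Combining these two steps handles the tori in $G$: given maximal tori $T_1,T_2$, enlarge each to a Borel subgroup $B_i\supseteq T_i$ (which exists because a maximal closed connected solvable subgroup containing $T_i$ exists by a standard dimension argument), conjugate $B_1$ onto $B_2$, and then conjugate $T_1$ onto $T_2$ inside the common Borel. For the Cartan statement, a Cartan subgroup is by definition $C_G(T)$ for some maximal torus $T$, and $C_G(gTg^{-1}) = g\,C_G(T)\,g^{-1}$, so conjugacy of maximal tori propagates directly to Cartan subgroups.

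The main obstacle is the conjugacy of Borel subgroups, which ultimately rests on Borel's fixed point theorem and on showing that $G/B$ is complete; both facts are nontrivial pieces of algebraic group theory. The induction inside a solvable group is technical but routine once one carefully handles the interaction between a torus and a normal unipotent subgroup. Everything else in the statement (equal dimensions, transfer to centralizers) is a formal consequence of conjugacy.
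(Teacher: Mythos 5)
Your proposal is correct, and it is essentially the argument behind the paper's reference: the paper gives no proof of its own but cites \cite[Cor.~11.3(1)]{borel1991}, whose proof is exactly your route (Borel fixed point theorem, completeness of $G/B$, conjugacy of Borel subgroups, conjugacy of maximal tori inside a connected solvable group, and transfer to Cartan subgroups via $C_G(gTg^{-1})=gC_G(T)g^{-1}$). Note only that, as in the paper's setup, $G$ should be taken connected (after replacing $G$ by $G^0$), which your sketch implicitly assumes.
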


\section{The algorithm}
\label{sec:algo}

The group $\GL_n(\C)$ acts on $A:=\C[x_1,\dots,x_n]$ via
$g.f(x)=f(g^{-1}.x)$ ($f\in A$, $g\in\GL_n(\C)$) where $g.x$ is the
usual action of $\GL_n(\C)$ on $\C^n$ via left multiplication.  From
now on, let $I\subset \C[x_1,\dots,x_n]$ be a homogeneous ideal unless
otherwise stated.  It need not be a prime ideal.  We denote by
$G\subset\GL_n(\C)$ the algebraic group that fixes~$I$, i.e.\ the
largest group $G$ with $G.I\subset I$.
Algorithm~\ref{alg:decideToric} takes as input a homogeneous
ideal~$I\subset A$ and tests if it can be made a binomial ideal using
a \cc $S \in \GL_{n}(\C)$.  Any prime ideal that is generated by
binomials can be generated by unital binomials after rescaling of
coordinates (so that the partial character in
\cite[Corollary~2.6]{es1996} becomes the constant~$1$).  Therefore a
binomial prime ideal can be made toric by a linear \cc and hence
Algorithm~\ref{alg:decideToric} also decides if an ideal can be made
toric after a \cc.

We have implemented Algorithm~\ref{alg:decideToric} in
\texttt{SageMath}~\cite{sage}.  The code is available at
\[
  \text{\url{https://github.com/villjulian/isToric}}
\]
Our experiments show that new examples are computable, that are
clearly out of reach of the algorithms from~\cite{kmm19}.

\begin{algorithm}
\caption{Decide toricness}
\label{alg:decideToric}
\KwIn{A homogeneous ideal $I\subset\C[x_1,\dots,x_n]$.}  \KwOut{A
  matrix $S\in\GL_n(\C)$ so that $S.I$ is generated by binomials and
  is prime or $\texttt{False}$ if such an $S$ does not exist (or $I$
  is not prime).}
Compute the Lie algebra $\g$ of the group $G\subset\GL_n(\C)$ fixing $I$, i.e. $G.I\subset I$.\\
Pick $x\in \g$ at random and compute $\c=\ker((\mathrm{ad} (x))^{\dim \g})$.\\
Check if $\c$ is a Cartan subalgebra of $\g$. If not go back to line 2.\\
Decompose $\c=\t\oplus \n$.\\
Compute an $S\in\GL_n(\C)$ that diagonalizes~$\t$.\\
Check if $S.I$ is a binomial ideal.  If not, return \texttt{False}.\\
Check if the binomial ideal $S.I$ is prime.  If not return \texttt{False}.\\
Return $S$.
\end{algorithm}

\begin{proof}[Proof of termination]
  In line 2, if $x$ is picked generic, the resulting algebra is a
  Cartan algebra with probability 1 by
  \cref{prop:cartan_algebra_kernel}.  Therefore no infinite loop can
  arise.  There are no further loops and all remaining parts are
  terminating algorithms.
\end{proof}

\begin{proof}[Proof of correctness]
  If $I$ is not a prime ideal or there is no $S\in\GL_n(\C)$ such that
  $S.I$ can be generated by binomials, then Step~6 or~7 returns
  \texttt{False}.
  Assume $I$ is a prime ideal and some linear transform $I' = S'.I$ is
  generated by binomials.
  This means there is a maximal torus $T'$ in $G$ such that the orbit
  under $T'$ of a generic element of $\vc(I)$ is dense in $\vc(I)$.
  We need to show that the same holds for the torus we compute.  Since
  all maximal tori are conjugate by \cref{prop:conjugate} and the
  torus $T$ we compute is maximal by
  \cref{prop:cartan_group_decomposition}, $T$ and $T'$ are conjugate.
  In particular, the orbit of a generic element of $\vc(I)$ under $T$
  is dense in $\vc(I)$.
  Lastly, if $S$ diagonalizes $\t$, the same holds for the torus $T$
  which now acts diagonally on $\A^n$ and has a dense orbit in
  $\vc(I)$. Thus the ideal can be generated by binomials. Since $I$ is
  prime so is~$S.I$.
\end{proof}

\begin{rem}\label{rem:checkPrime}
  Of course it is a necessary condition that $I$ be prime for a
  positive result.  We check for primeness in the end, because testing
  if a binomial ideal is prime (by \cite[Corollary~2.6]{es1996}) is
  much easier than testing a general ideal (using~\cite{EHV,GTZ}).  In
  our experience the binomiality test is often faster than the
  primality test.  Therefore it makes sense to run it first.
\end{rem}

In the following sections, we comment on the individual steps of the
algorithm.

\subsection{Computing the Lie algebra}

To compute the Lie algebra we follow~\cite{mp2023}.  In that paper $I$
is assumed prime, but this can be avoided as follows: Let $I$ be
generated by polynomials $f_1,\dots,f_s$ of degrees
$d_1\le\dots\le d_s$.  We compute the Lie algebra of the subgroup of
$\GL_n(\C)$ that fixes $I_{d_i}$, the degree $d_{i}$ part of~$I$, for
$i=1,\dots,s$ with the algorithm proposed in
\cite[Theorem~26]{mp2023}. The Lie algebra of the group that fixes all
of $I$ is then the intersection of all these Lie algebras.

The action of $\GL_{n}(\C)$ on $A$ also gives rise to an action of
$M_n(\C)$, the Lie algebra of $\GL_n(\C)$, on $A$ via derivation.  For
$g\in M_n(\C)$ and $f\in A$ let
\[
g\star f(x)=\left.\frac{d}{dt}f(e^{-tg}x)\right\vert_{t=0}.
\]
Then $g\star (fh)=f(g\star h)+h(g\star f)$ for $f,h\in A$.  Moreover,
as described in \cite[Definition~19]{mp2023}, $g$ maps any constant to
zero and any variable $x_i$ to $-\sum_{j=1}^n g_{ij} x_j$.

Most importantly, if $g$ is an $n\times n$ matrix having
indeterminates $g_{ij}$ for $(1\le i,j\le n)$ as entries, the
polynomial $g\star f\in\C[x_1,\dots,x_n,(g_{ij})_{i,j}]$ has degree at
most 1 in the variables $(g_{ij})_{i,j}$. This makes computing the Lie
algebra of the group fixing the ideal a linear algebra problem which
can be solved efficiently.

\subsection{The kernel of  a power of a generic adjoint and Cartan algebras}
An $x$ as in Step~2 can be picked, for example, by taking a random
linear combination of any basis using complex normally distributed
coefficients.  Since \cref{dfn:cartan_algebra} is computational, it
can be used to confirm that $\c$ is Cartan and this is efficient. 
Even if the scalars in a linear combination are randomly chosen from
just $0,\pm 1$ we found a Cartan algebra in most of our experimeents.

\subsection{Decomposing a Cartan algebra}
By \cref{cor:cartan_algebra_decomposition} it suffices to decompose a
basis of $\c$ into diagonalizable and nilpotent parts by Jordan
decomposition.  Indeed, for any element, both the nilpotent and the
diagonalizable summand are also contained in~$\c$.  Moreover, the
resulting set of twice as many elements is still generating.  Hence,
every element is now either contained in $\t$ or~$\n$.
  
\subsection{Diagonalizing the toral subalgebra}
In order to diagonalize the toral subalgebra $\t$ and with it the
torus $T$, we pick a generic element of $\t$ and diagonalize it.  By
the following easy lemma this \cc\ diagonalizes all of~$\t$.
\begin{lem} \label{lem:diagonalizing_torus} Let $V\subset M_n(\C)$ be
  a subspace that is simultaneously diagonalizable.  Let $M\in V$ be
  generic and let $S\in\GL_n(\C)$ be a matrix that diagonalizes $M$,
  i.e.\ $S^{-1}MS$ is diagonal. Then $S^{-1}VS$ consists entirely of
  diagonal matrices.\qed
\end{lem}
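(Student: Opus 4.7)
The plan is to reduce the statement to the observation that a generic element of a simultaneously diagonalizable subspace has, up to permutation and column rescaling, a unique eigenbasis, and that this eigenbasis automatically diagonalizes every other element of the subspace.

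More concretely, I would first fix some $T\in\GL_n(\C)$ for which $T^{-1}VT$ consists entirely of diagonal matrices; such a $T$ exists by hypothesis. Let $E_1,\dotsc,E_r\subset\C^n$ be the joint eigenspaces of $V$, obtained by partitioning $\{1,\dotsc,n\}$ according to which coordinates of $T^{-1}NT$ carry equal diagonal entries for all $N\in V$, and then transporting back by $T$. Every element $N\in V$ acts on each $E_j$ by a scalar $\lambda_j(N)$, and the linear functionals $\lambda_1,\dotsc,\lambda_r$ on $V$ are pairwise distinct by construction. Hence the locus
\[
\{M\in V \colon \lambda_i(M)=\lambda_j(M)\text{ for some } i\neq j\}
\]
is a proper Zariski-closed subset of $V$. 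For generic $M\in V$, outside this subset, $M$ has exactly $r$ distinct eigenvalues and its eigenspace decomposition is precisely $\C^n=\bigoplus_{j=1}^r E_j$.

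Now suppose $S\in\GL_n(\C)$ satisfies $S^{-1}MS=\Lambda$ with $\Lambda$ diagonal. Each column $s_i$ of $S$ is an eigenvector of $M$ with eigenvalue $\Lambda_{ii}$, so after re-indexing the columns according to which eigenvalue they correspond to, each $s_i$ lies in one of the joint eigenspaces $E_{j(i)}$. The key point is that this property only used $M$: for any other $N\in V$, the action on $E_{j(i)}$ is again scalar multiplication by $\lambda_{j(i)}(N)$, so $N s_i=\lambda_{j(i)}(N)\,s_i$. Consequently $S^{-1}NS$ is diagonal for every $N\in V$, which is the desired conclusion.

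The only place where care is needed is the genericity statement in step~one, namely that the subspace spanned by the joint eigenspaces really is all of $\C^n$ and that distinct joint eigenspaces produce distinct eigenvalue functionals on $V$. Both follow immediately from the definition of joint eigenspaces together with the hypothesis that $V$ is simultaneously diagonalizable, so there is no substantial obstacle; the lemma is essentially a bookkeeping exercise once the joint eigenspace decomposition is in hand.
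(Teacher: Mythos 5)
Your proof is correct. The paper in fact omits the argument entirely (the lemma is stated with a \qed as an easy fact), and your joint-eigenspace argument --- showing the eigenvalue functionals $\lambda_1,\dotsc,\lambda_r$ on $V$ are pairwise distinct, so a generic $M$ has eigenspace decomposition exactly $\bigoplus_j E_j$, whence every column of $S$ is a joint eigenvector of all of $V$ --- is precisely the standard justification the authors had in mind, with no gaps (the re-indexing of columns you mention is not even needed, since each column is already an $M$-eigenvector for its own diagonal entry).
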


If the element $M$ is not generic with respect to~$V$, the base change
$S$ need not diagonalize all of~$V$.  This happens for example when
all eigenspaces of a generic element are 1-dimensional, but the chosen
$M$ has a 2-dimensional eigenspace. Then the matrix $S$ that
diagonalizes all of $V$ has as its columns the bases of all the
1-dimensional eigenspaces.  These columns are unique up to scaling and
so is $S$.  However, a matrix diagonalizing only $M$ leaves an entire
choice of basis of a 2-dimensional eigenspace.

\subsection{Binomial and prime binomial ideals}
The binomiality check in Step~6 of the algorithm is necessary because
without knowledge whether $I$ is prime or not, we cannot argue about
the orbits and stabilizers of the action of~$\t$.  If $I$ is prime,
\cref{prop:KnowPrime} shows that this step can be skipped.

Given an ideal $S.I$, to check whether it is a prime binomial ideal we
first determine if it can be generated by binomials, which is
equivalent to the reduced Gr\"obner basis consisting of binomials
by~\cite[Corollary~1.2]{es1996}.  This check can also be implemented
without Gröbner bases using \cite{conradiKahle15}.  Assuming $S.I$ is
binomial and contains no monomials, we can apply
\cite[Corollary~2.6]{es1996}.  This reduces the check for a prime
ideal to a $\Z$-linear algebra computation on the \todfn{lattice of
  exponents} $L_{I}$ of a binomial ideal $I\subset A$ which consists
of all $u-v\in \Z^{n}$ such that $x^{u} - a x^{v} \in I$ for some
$a\in \C\setminus\{0\}$.  This lattice is \todfn{saturated} if
$L_{I} = \{m\in\Z^n\colon dm\in L_{I} \text{ for some } d\in\Z \}$ or
equivalently, if $\Z^{n}/L_{I}$ is free.
\begin{prop}\label{prop:ES}
  Let $I\subset \C[x_1,\dots,x_n]$ be a binomial ideal that contains
  no monomials.  Then $I$ is prime if and only if the lattice
  $L_I\subset\Z^n$ is saturated.
\end{prop}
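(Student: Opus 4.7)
The plan is to follow the classical partial character argument of Eisenbud--Sturmfels. Since $I$ is binomial and contains no monomials, for each lattice point $u\in L_I$ there is a unique $\rho(u)\in\C^*$ such that $x^{u^+}-\rho(u)x^{u^-}\in I$, where $u=u^+-u^-$ is the decomposition into positive and negative parts with disjoint supports (uniqueness comes from the fact that two such binomials with equal exponents would differ by a monomial forced into $I$). A short check shows that $\rho\colon L_I\to\C^*$ is a group homomorphism, and the whole ideal is
\[
I=\ideal{x^{u^+}-\rho(u)x^{u^-}\colon u\in L_I}.
\]

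For the forward implication, assume $I$ is prime and suppose $dm\in L_I$ for some $m\in\Z^n$ and $d\geq 1$. Setting $a=\rho(dm)\in\C^*$, pick $b\in\C$ with $b^d=a$ and a primitive $d$-th root of unity $\zeta$. Using $dm^{\pm}=(dm)^{\pm}$ together with disjoint supports, we factor in $\C[x_1,\dots,x_n]$:
\[
x^{dm^+}-a\,x^{dm^-}=\prod_{k=0}^{d-1}\bigl(x^{m^+}-\zeta^k b\,x^{m^-}\bigr).
\]
The left-hand side lies in $I$, so by primeness some factor $x^{m^+}-\zeta^k b\,x^{m^-}$ lies in $I$, whence $m\in L_I$. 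This proves $L_I$ is saturated.

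For the reverse direction, assume $L_I$ is saturated, so $\Z^n/L_I$ is free abelian of some rank $r$. Because $\C^*$ is a divisible abelian group and $L_I$ is saturated (so $\Z^n/L_I$ is torsion-free), we can extend $\rho$ to a character $\tilde\rho\colon\Z^n\to\C^*$. Define a ring map
\[
\phi\colon\C[x_1,\dots,x_n]\longrightarrow \C[\Z^n/L_I]\cong\C[y_1^{\pm 1},\dots,y_r^{\pm 1}],\qquad x_i\longmapsto \tilde\rho(e_i)\cdot[e_i],
\]
where $[e_i]$ denotes the image of $e_i$ in $\Z^n/L_I$. Each generator $x^{u^+}-\rho(u)x^{u^-}$ of $I$ maps to $\tilde\rho(u^+)[u^+]-\rho(u)\tilde\rho(u^-)[u^-]=\tilde\rho(u^+)[u^-]\bigl([u]-1\bigr)\cdot\tilde\rho(u^-)^{-1}\cdot(\text{unit})=0$ because $[u]=0$ in $\Z^n/L_I$, so $I\subset\ker\phi$. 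The reverse inclusion follows from a standard normal-form argument: modulo $I$ every polynomial can be rewritten as a $\C$-linear combination of monomials whose exponent vectors are inequivalent representatives of cosets of $L_I$ in $\Z^n$, and these are mapped by $\phi$ to $\C$-linearly independent elements of the Laurent ring. Hence $I=\ker\phi$, and since $\C[y_1^{\pm1},\dots,y_r^{\pm1}]$ is an integral domain, $I$ is prime.

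The main subtlety is the reverse direction: both the extension of $\rho$ to $\tilde\rho$ (which genuinely uses saturatedness) and the normal-form argument showing $\ker\phi\subset I$ are the technical heart. The forward direction is comparatively mechanical once one has the factorization trick above.
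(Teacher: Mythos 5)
Your overall strategy --- encode $I$ by a partial character $\rho$ on $L_I$, prove one direction by factoring $x^{dm^+}-a\,x^{dm^-}$ into $d$ binomial factors via roots of unity, and the other by mapping onto the group algebra $\C[\Z^n/L_I]$ --- is exactly the Eisenbud--Sturmfels argument that the paper does not reprove but cites (Corollary~2.6 of \emph{Binomial ideals}). However, your very first step has a genuine gap. From $u\in L_I$ you only know that \emph{some} binomial $x^a-\lambda x^b$ with $a-b=u$ lies in $I$, i.e.\ $x^w\bigl(x^{u^+}-\lambda x^{u^-}\bigr)\in I$ for some monomial $x^w$; to conclude that the primitive binomial $x^{u^+}-\rho(u)x^{u^-}$ itself lies in $I$ you must cancel $x^w$, and ``$I$ contains no monomials'' does not license this cancellation --- you need the variables to be nonzerodivisors modulo $I$, equivalently $(I:x_i)=I$ for all $i$. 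In the forward direction this is harmless, because primeness of $I$ together with the absence of monomials supplies the cancellation. In the reverse direction it is fatal: there the existence of $\rho$ on all of $L_I$ and the identity $I=\ideal{x^{u^+}-\rho(u)x^{u^-}\colon u\in L_I}$ are exactly what is unproved, and they are false in general. For instance $I=\ideal{xy-x}\subset\C[x,y]$ is binomial, contains no monomials, and has $L_I=\{0\}\times\Z$, which is saturated, yet $I$ is not prime; your construction would require some $y-\rho\in I$, which does not exist.

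The fix is to add (or extract from the surrounding context) the hypothesis that no variable is a zerodivisor modulo $I$ --- this is precisely the condition the paper verifies algorithmically by checking the colon ideals $(S.I:x_i)=S.I$ before invoking the proposition, and it is equivalent to $I$ being the lattice ideal $I(\rho)$ of a partial character, which is the form in which Eisenbud--Sturmfels state their Corollary~2.6. With that hypothesis, your opening claim is justified (cancel $x^w$; uniqueness of $\rho(u)$ then follows since two values would force a monomial into $I$, and $\rho$ is a character on the lattice), and the rest of your argument --- the factorization $x^{dm^+}-a\,x^{dm^-}=\prod_{k=0}^{d-1}\bigl(x^{m^+}-\zeta^k b\,x^{m^-}\bigr)$ plus primeness for saturation, and conversely the extension of $\rho$ to $\Z^n$ using divisibility of $\C^*$ and torsion-freeness of $\Z^n/L_I$, followed by the normal-form identification $I=\ker\phi$ with image in a Laurent ring --- is the correct and standard proof.
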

If $S.I$ contains some variables these variables are also contained in
its reduced Gr\"obner basis.  In this case we can simply work modulo
these variables.  So, without loss of generality, assume that $S.I$
contains no variables.  Then we compute the colon ideals
$(S.I : x_{i})$ for all $i$.  If one of these colon ideals is not
equal to $S.I$, then $S.I$ is not prime.  If all are equal to $S.I$,
then $S.I$ contains no monomials and by \cref{prop:ES} it suffices to
check if $L_{I}$ is saturated.  This can be done by computing the
Smith normal form of a matrix whose columns are generators of $L_{I}$,
thereby exposing the quotient $\Z^{n}/L_{I}$.

\section{Implementation, examples, heuristics}
\label{sec:details}

We gather several examples from the literature as well as remarks on
our experiences.
  
\begin{rem}\label{rem:probabilistic}
  Algorithm~\ref{alg:decideToric} is probabilistic in a weak sense.
  We do pick random elements to get a Cartan algebra and to
  diagonalize the torus, but we can always immediately certify that
  this random choice was indeed generic.  For Cartan algebras this can
  be done using \cref{dfn:cartan_algebra}.  When diagonalizing the
  torus, we simply check if the chosen base change indeed diagonalizes
  a basis of $\t$ by computing the products $S^{-1}AS$ for every basis
  element $A$.
\end{rem}
\begin{rem}\label{rem:torus_explicit}
  Regardless of whether the ideal ends up being toric or not, after
  taking the exponential of the Lie algebra~$\t$, we get a torus $T$
  acting on the variety~$\vc(I)$. More precisely, $T$ is the group
  generated by $\exp(t_1),\dots,\exp(t_s)$ if $t_1,\dots,t_s$ is a
  basis of~$\t$.
\end{rem}

Whenever the group $G^0$ is not a torus itself, there may be
infinitely many maximal tori contained in it. The algorithm takes one
at random and changes coordinates so that this torus acts diagonally.
In particular, even if there exists a `nice' base change according to
some combinatorial interpretation, the algorithm might not find it.
However, if the group is a torus, the base change may have a
combinatorial interpretation. One example where this happens is the
following and also appeared on
\href{https://mathoverflow.net/questions/93224/proving-that-a-variety-is-not-isomorphic-to-a-toric-variety}{MathOverflow}.
\begin{example}
\label{ex:complete_intersection}
Consider the three quadratic forms
\[
  p_1=e t - r y - q u + w o,\quad p_2=w t - q y - r u + e o,\quad p_3=w e - q r - y u + t o,
\]
which are a complete intersection in $\C[q,w,e,r,t,y,u,o]$.  Let
$I = \ideal{p_{1},p_{2},p_{3}}$.  Its variety $\vc(I)\subset\A^8$ is
$5$-dimensional.  Let $G\subset\GL_8(\C)$ be the subgroup that
fixes~$I$.  We compute the Lie algebra $\g$ of~$G$. It has dimension 5
and has the following basis
\newcommand{\myScaleFactor}{0.84}
\begin{gather*}
\scalemath{\myScaleFactor}{\begin{pmatrix}
1 & 0 & 0 & 0 & 0 & 0 & 0 & 0 \\
0 & 1 & 0 & 0 & 0 & 0 & 0 & 0 \\
0 & 0 & 1 & 0 & 0 & 0 & 0 & 0 \\
0 & 0 & 0 & 1 & 0 & 0 & 0 & 0 \\
0 & 0 & 0 & 0 & 1 & 0 & 0 & 0 \\
0 & 0 & 0 & 0 & 0 & 1 & 0 & 0 \\
0 & 0 & 0 & 0 & 0 & 0 & 1 & 0 \\
0 & 0 & 0 & 0 & 0 & 0 & 0 & 1
\end{pmatrix}},
\scalemath{\myScaleFactor}{\begin{pmatrix}
0 & 1 & 0 & 0 & 0 & 0 & 0 & 0 \\
1 & 0 & 0 & 0 & 0 & 0 & 0 & 0 \\
0 & 0 & 0 & 1 & 0 & 0 & 0 & 0 \\
0 & 0 & 1 & 0 & 0 & 0 & 0 & 0 \\
0 & 0 & 0 & 0 & 0 & 1 & 0 & 0 \\
0 & 0 & 0 & 0 & 1 & 0 & 0 & 0 \\
0 & 0 & 0 & 0 & 0 & 0 & 0 & 1 \\
0 & 0 & 0 & 0 & 0 & 0 & 1 & 0
\end{pmatrix}},
\scalemath{\myScaleFactor}{\begin{pmatrix}
0 & 0 & 1 & 0 & 0 & 0 & 0 & 0 \\
0 & 0 & 0 & 1 & 0 & 0 & 0 & 0 \\
1 & 0 & 0 & 0 & 0 & 0 & 0 & 0 \\
0 & 1 & 0 & 0 & 0 & 0 & 0 & 0 \\
0 & 0 & 0 & 0 & 0 & 0 & 1 & 0 \\
0 & 0 & 0 & 0 & 0 & 0 & 0 & 1 \\
0 & 0 & 0 & 0 & 1 & 0 & 0 & 0 \\
0 & 0 & 0 & 0 & 0 & 1 & 0 & 0
\end{pmatrix}},\\
\scalemath{\myScaleFactor}{\begin{pmatrix}
0 & 0 & 0 & 0 & 1 & 0 & 0 & 0 \\
0 & 0 & 0 & 0 & 0 & 1 & 0 & 0 \\
0 & 0 & 0 & 0 & 0 & 0 & 1 & 0 \\
0 & 0 & 0 & 0 & 0 & 0 & 0 & 1 \\
1 & 0 & 0 & 0 & 0 & 0 & 0 & 0 \\
0 & 1 & 0 & 0 & 0 & 0 & 0 & 0 \\
0 & 0 & 1 & 0 & 0 & 0 & 0 & 0 \\
0 & 0 & 0 & 1 & 0 & 0 & 0 & 0
\end{pmatrix}},
\scalemath{\myScaleFactor}{\begin{pmatrix}
0 & 0 & 0 & 0 & 0 & 0 & 0 & 1 \\
0 & 0 & 0 & 0 & 0 & 0 & 1 & 0 \\
0 & 0 & 0 & 0 & 0 & 1 & 0 & 0 \\
0 & 0 & 0 & 0 & 1 & 0 & 0 & 0 \\
0 & 0 & 0 & 1 & 0 & 0 & 0 & 0 \\
0 & 0 & 1 & 0 & 0 & 0 & 0 & 0 \\
0 & 1 & 0 & 0 & 0 & 0 & 0 & 0 \\
1 & 0 & 0 & 0 & 0 & 0 & 0 & 0
\end{pmatrix}}.
\end{gather*}
This Lie algebra is in fact a Cartan algebra and simultaneously
diagonalizable, i.e.\ $G$ is a torus.  A base change making this ideal
binomial is given by 
\[
\scalemath{\myScaleFactor}{\begin{pmatrix}
1 & 1 & 1 & 1 & 1 & 1 & 1 & 1 \\
-1 & 1 & 1 & -1 & 1 & -1 & -1 & 1 \\
1 & -1 & 1 & -1 & 1 & -1 & 1 & -1 \\
-1 & -1 & 1 & 1 & 1 & 1 & -1 & -1 \\
1 & 1 & 1 & 1 & -1 & -1 & -1 & -1 \\
-1 & 1 & 1 & -1 & -1 & 1 & 1 & -1 \\
1 & -1 & 1 & -1 & -1 & 1 & -1 & 1 \\
-1 & -1 & 1 & 1 & -1 & -1 & 1 & 1
\end{pmatrix}}.
\]
A generic element of $\g$ has simple eigenvalues and this matrix
contains the corresponding eigenvectors in its columns. Thus up 
to reordering of the columns and their scaling this matrix is unique.
\end{example}

\begin{rem}
  In the last example as well as in the following, the Lie algebra
  itself is toral.  Then the coordinate change making the ideal
  binomial is unique up to reordering and scaling.  This does not have
  to be the case in general.  Since all maximal tori are conjugate the
  maximal torus is unique if and only if it is normal in~$G^0$.
  However, even in cases where a maximal torus is not unique, one
  can be lucky as in \cref{ex:many_rat_cc} and find nice rational
  coordinate changes.
  In general, finding rational transformations seems very difficult
  as one needs to find a torus that is diagonalizable over~$\Q$.
\end{rem}

\begin{example}
  Consider the ideal from \cite[Example 3]{ss2005} generated by three
  quadratic forms with 8 terms each.  The generators are given as
  \begin{gather*}
    p_{001} p_{010} - p_{000} p_{011} + p_{001} p_{100} - p_{000} p_{101} - p_{011} p_{110} - p_{101} p_{110} + p_{010} p_{111} + p_{100} p_{111},\\
    p_{001} p_{010} - p_{000} p_{011} + p_{010} p_{100} - p_{011} p_{101} - p_{000} p_{110} - p_{101} p_{110} + p_{001} p_{111} + p_{100} p_{111},\\
    p_{001} p_{100} + p_{010} p_{100} - p_{000} p_{101} - p_{011} p_{101} - p_{000} p_{110} - p_{011} p_{110} + p_{001} p_{111} + p_{010} p_{111}.
  \end{gather*} 
  Using Algorithm~\ref{alg:decideToric} we find the \cc given in
  \cite{ss2005} which has been constructed using the discrete Fourier
  transform.  Moreover, we find that the Lie algebra itself is toral,
  proving that this is indeed the unique (up to scaling and permuting)
  \cc making the ideal binomial.
\end{example}

\begin{example}
\label{ex:many_rat_cc}
Consider the ideal of the Gaussian colored path on three vertices where
each vertex has a distinct color and where the two edges $12$ and $23$
share the same color~\cite[Example 7.7]{cmms2023}:
$\ideal{\sigma_{13}\sigma_{22}-\sigma_{12}\sigma_{23},
  \sigma_{12}\sigma_{13}-\sigma_{11}\sigma_{23}-\sigma_{13}\sigma_{23}+\sigma_{12}\sigma_{33}}$.
In that paper the authors give a rational \cc making the ideal toric.
Running our algorithm multiple times results in different changes of coordinates with
this effect.  However, many of them have rational entries.
\end{example}

In the next example we show that our algorithm falls apart for non-reduced
schemes as the binomial structure of the ideal may not be retrieved from
the Lie algebra.
\begin{example}
\label{ex:trivial_lie_algebra}
Consider the binomial ideal
$I = \ideal {x^4,y^4,x^3y-xy^3} \subset \C[x,y]$.  The Lie algebra of
the group fixing this ideal has dimension 1, containing only multiples
of the identity matrix.  Hence the Lie algebra of any ideal $J$,
obtained from $I$ by a generic linear \cc is trivial and one cannot
gain any insight from it.
\end{example}

A special case of binomial structure is when the ideal can in fact be
generated by unital binomials.  By computing the dimensions of maximal
tori we can at least exclude this case, even if $I$ is not prime.
\begin{rem}\label{rem:notPrimeExclude}
  If an ideal $I$ is unital then $\vc(I)$ acts on itself via
  component-wise \todfn{Hadamard multiplication} (see e.g.\
  \cite[Prop.~4.7]{friedenberg2017minkowski}).  Embedding this action
  diagonally into $\GL_n(\C)$, gives a torus of dimension $\dim\vc(I)$
  acting on~$\vc(I)$.  Therefore, by computing a maximal torus using
  Algorithm~\ref{alg:decideToric}, we can prove that an ideal cannot be
  generated by unital binomials if the dimension of a maximal torus is
  strictly smaller than $\dim\vc(I)$.  However, effectively deciding
  if an arbitrary ideal can be generated by unital binomials seems
  difficult as \cref{ex:trivial_lie_algebra} shows.
\end{rem}

Algorithm~\ref{alg:decideToric} relies on inexact arithmetic in~$\C$.
If the input ideal is defined over $\Q$ or a number field, however,
all computations can be carried out in algebraic (and hence
computable) extensions.  This is the approach we use in our
implementation and due to this, the diagonalization in Step~5 of
Algorithm~\ref{alg:decideToric} can take a long time.  It utilizes the
implementation of the field of algebraic numbers in \texttt{SageMath}.

\begin{rem}\label{rem:LieAlgVSGroebner} Computing Gröbner bases can be
  costly, while both the computation of the Lie algebra and a maximal
  torus do not rely on \gb calculations but rather on linear algebra.
  In practice one will often need a Gröbner basis of the ideal under
  consideration, for example to compute the Krull dimension, but it is
  always good to be careful and avoid such steps if possible.  For
  example, for many Gaussian graphical models on 6 vertices (see
  \cref{sec:gaussGM}) we are not even able to compute a single \gb but
  the computation of a maximal torus terminates within seconds.  See
  \cite{conradiKahle15} for how to decide if a homogeneous ideal is
  binomial without Gröbner bases.
\end{rem}

\begin{rem}
  While examples with a complicated nilpotent part can be crafted by
  hand, in all of our experiments with examples from the literature,
  the whole Cartan algebra has always been simultaneously
  diagonalizable directly.
\end{rem}

\begin{rem}
  Most of the time when we needed to pick a generic element it had
  distinct eigenvalues.  This speeds up computations tremendously.
  Moreover, most of the time it seems to be sufficient to pick random
  linear combinations of basis elements with coefficients
  $0,\pm\frac{1}{2}, \pm 1, \pm 2$.  Then the entries of the generic
  matrix are small rationals and computations finish faster.
\end{rem}

\begin{rem}
  If finding a Cartan algebra or diagonalizing a toral algebra takes
  too long, it can be worthwhile to search for a ``sufficiently
  generic'' element.  Say one of the basis elements $M$ of a Lie
  algebra $\g$ has distinct eigenvalues.  Then it is usually faster to
  compute a Cartan algebra using this element instead of taking a
  random linear combination of all basis elements. This might lead to
  a nicer Cartan algebra and thus to a nicer toral algebra that can be
  diagonalized more efficiently.
\end{rem}

As part of our algorithm, we find a maximal torus $T$ acting
faithfully on a non-degenerate irreducible variety~$\vc(I)$.  In this
situation, $\vc(I)$ is sometimes called a \todfn{$T$-variety of
  complexity $c = \dim \vc(I) - \dim(T)$}.  So a toric variety is a
$T$-variety with $c=0$, while an arbitrary variety is a $T$-variety of
complexity $c = \dim \vc(I)$.  See \cite{altmann2012geometry} for a
survey of the combinatorial and geometric aspects of such actions.  By
computing a maximal torus, our methods also compute this notion of
complexity and we can detect complexity zero.  The essential point is,
that if it is known that $I$ is prime, the check if $S.I$ is binomial
in Step~6 is superfluous.
\begin{prop}\label{prop:KnowPrime}
  Let $I\subset\C[x_1,\dots,x_n]$ be a homogeneous prime ideal. Let
  $T$ be a maximal torus acting on $I$. If $\vc(I)$ is non-degenerate,
  i.e.\ not contained in any hyperplane and $\dim\vc(I)=\dim T$, then
  $I$ is toric after some \cc.
\end{prop}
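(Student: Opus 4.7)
The plan is to use the diagonalization step of the algorithm to reduce to the case where $T$ acts diagonally, and then argue that $\vc(I)$ is the Zariski closure of a single $T$-orbit, from which binomiality follows from standard toric ideal theory. Concretely, I would first pick $S\in\GL_n(\C)$ that diagonalizes the Lie algebra $\t$ of $T$, as in \cref{lem:diagonalizing_torus}. Replacing $I$ by $S.I$ and $T$ by its conjugate, the torus $T\cong(\C^\star)^k$ with $k=\dim T$ now acts diagonally on $\C^n$ with each $x_i$ a $T$-eigenvector of some character $\chi_i\in\Z^k$. Non-degeneracy is preserved under any linear \cc, so $\vc(S.I)$ is still not contained in any hyperplane, in particular not in any coordinate hyperplane, and hence it meets the open torus $(\C^\star)^n$ in a dense open subset.

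The key technical step is to find a point whose $T$-orbit is dense. For any $p\in\vc(S.I)$ with all coordinates nonzero, the stabilizer is $T_p=\{t\in T\colon t^{\chi_i}=1\text{ for all }i\}$, which is the kernel of the homomorphism $T\to(\C^\star)^n$, $t\mapsto(t^{\chi_1},\dots,t^{\chi_n})$. After diagonalization this homomorphism is, up to the inclusion of diagonal matrices into $\GL_n(\C)$, precisely the embedding $T\hookrightarrow\GL_n(\C)$, and is therefore injective. Hence $T_p$ is trivial, so $\dim T.p=\dim T=\dim\vc(I)$. Since $S.I$ remains prime, $\vc(S.I)$ is irreducible and the orbit closure $\overline{T.p}$ coincides with $\vc(S.I)$.

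Finally, the closure of an orbit of a diagonal torus action is the image of a monomial map $T\to\A^n$, $t\mapsto(p_1 t^{\chi_1},\dots,p_n t^{\chi_n})$, and its vanishing ideal is generated by the binomials $p^b x^a - p^a x^b$ for $a,b\in\Z_+^n$ with $\sum a_i\chi_i=\sum b_i\chi_i$. Consequently $S.I$ is a binomial prime ideal, so by the argument in \cref{sec:algo} based on \cite[Corollary~2.6]{es1996} a further diagonal rescaling of variables makes these binomials unital, and the composition of the two linear changes of coordinates makes $I$ toric. The step I expect to be the main obstacle is the stabilizer computation: one must use non-degeneracy to land at generic points with full nonzero support, and crucially exploit that $T$ is given as a \emph{closed subgroup} of $\GL_n(\C)$, not merely as an abstract torus acting on $\vc(I)$, in order to conclude that the characters $\chi_1,\dots,\chi_n$ jointly separate elements of~$T$.
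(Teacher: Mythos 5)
Your proposal is correct and is essentially the paper's own argument, just run directly instead of by contradiction: both proofs diagonalize $T$, observe that a point of $\vc(S.I)$ with all coordinates nonzero has trivial stabilizer inside the embedded diagonal torus, and use non-degeneracy together with irreducibility (no containment in a coordinate hyperplane) to obtain such a point, whence the orbit has dimension $\dim T=\dim\vc(I)$ and is dense. Your closing paragraph (the orbit-closure ideal is generated by the binomials $p^b x^a-p^a x^b$, made unital by rescaling via \cite[Corollary~2.6]{es1996}) merely spells out the step the paper compresses into ``$\vc(I)$ is toric if and only if all generic stabilizers are trivial,'' so there is no substantive difference in approach.
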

\begin{proof}
  The torus $T$ acts on $\vc(I)$ and the orbit of a generic element
  has dimension $\dim T$ if and only if the stabilizer of a generic
  point is trivial.  Thus $\vc(I)$ is toric if and only if all generic
  stabilizers are trivial.  

  Assume for a contradiction that there exists a Zariski dense subset
  $U\subset\vc(I)$ such that for any point in $U$ its stabilizer is
  not trivial.
  
  After applying a \cc we can assume that $T$ acts diagonally on $X$
  where $X$ is the image of $\vc(I)$ under that \cc. Let $U'\subset X$
  be the image of $U$ under this \cc.  No point with only non-zero
  coordinates can be fixed by any other element than the identity.
  Thus every point in $U'$ has to have some coordinate equal to zero,
  i.e.
  \[
    U'\subset \bigcup_{i=1}^n  \{x_i=0\}.
  \]
  Taking the Zariski closure we get 
   \[
     X\subset \bigcup_{i=1}^n \{x_i=0\}.
   \]
  since $U'$ is dense in $X$ by assumption.
  Therefore
    \[
    X=\bigcup_{i=1}^n \left( X\cap \{x_i=0\}\right)
  \] 
  and since $X$ is irreducible, the union is equal to one of the
  $(X\cap \{x_{i} = 0\})$, showing that $X$ and thus $\vc(I)$ is
  contained in a hyperplane.  This contradicts the assumption on
  $\vc(I)$ and shows that generic points of $\vc(I)$ have trivial
  stabilizers, so $\vc(I)$ is toric.
\end{proof}

The proposition will be useful mostly if one knows from the start that
a given ideal is prime and does not contain a linear form.  Checking
this computationally is often orders of magnitude slower than
Algorithm~\ref{alg:decideToric} (see Remark~\ref{rem:checkPrime}).  If
one has this information, however, this removes any computations with
algebraic numbers.

\begin{rem}
  If the variety under consideration is degenerate, one can always
  restrict to an affine space containing the variety and compute a
  maximal torus there.
\end{rem}

\section{Non-homogeneous ideals}
\label{sec:nonhomog}
An adapted version of Algorithm~\ref{alg:decideToric} can be applied
to non-homogeneous ideals.  In this case it seems more natural to
allow an affine-linear \cc, but if desired, one may also restrict to
linear coordinate changes.  We only spell out the affine-linear case.

Let $I\subset\C[x_1,\dots,x_n]$ be a not necessarily homogeneous
ideal.  Write $I^h$ for its homogenization with respect to a
variable~$x_0$.  For any homogeneous ideal $J\subset\C[x_0,\dots,x_n]$
let $J^d$ be its dehomogenization, i.e.\ the ideal in
$\C[x_1,\dots,x_n]$ obtained by setting $x_0=1$.  Let $\GL_{n+1}(\C)$
act on $\C[x_0,\dots,x_n]$ as usual and consider the subgroup
\[
H:=\{S\in\GL_{n+1}(\C)\colon  e_0 S^{-1}=e_0\}=\{S\in\GL_{n+1}(\C)\colon  e_0 S=e_0\}
\]
where $e_0=(1,0,\dots,0)$.  Let $H$ act on $\C[x_1,\dots,x_n]$ as
$S.f(x)=(S.f^h(x))^d$ for $S\in H$ and $f\in\C[x_1,\dots,x_n]$.  It
holds that $(S.I)^h=S.I^h$.  Explicitly, any $S\in H$ is of the form
\[
S=
\left(
\begin{array}{c|ccc}
1 & 0 & \cdots & 0\\
\hline
\star &  &  & \\
\vdots &  & \star & \\
\star &  &  & \\
\end{array}
\right).
\]
Assume $S\in H$ is such that $S.I$ is toric.  Since any reduced \gb of
$S.I$ consists of binomials, there exists a term order such that
homogenizing the \gb gives rise to a \gb of $(S.I)^h=S.I^h$. Hence,
$I^h$ is toric.

The other implication is not true.  Even if the homogenization can be
made toric, this need not be true for the ideal $I$.
Dehomogenizing the ideal $I$ in \cref{ex:complete_intersection} with
respect to any variable results in a non-homogeneous ideal in 7
variables that cannot be made toric.  Indeed, any \cc that would do so
lies in $H\subset\GL_8(\C)$. However, there is only one (up to scaling
and reordering of the columns) \cc that makes $I$ toric.  It does not
have the required form.

To decide if a potentially non-homogeneous ideal can be made toric
using an affine-linear \cc, Algorithm~\ref{alg:decideToric} needs only
one modification.  Step~1 is extended so that $\g$ is now the Lie
algebra of $H$ above.

\smallskip\hrule\vspace{2pt}
\noindent {\scriptsize\textbf{1'}}$\,\,$ Compute the Lie algebra
$\tilde{\g}$ of the group $\tilde{G}\subset\GL_{n+1}(\C)$ fixing
$I^h$, i.e.\ $\tilde{G}.I^h\subset I^h$.  Then compute the Lie
subalgebra $\g$ of $\tilde{\g}$ consisting of all matrices $A$ with
$e_0=(1,0,\dots,0)^T\in\C^{n+1}$ as an eigenvector of $A^T$.
\vspace{2pt}\hrule\smallskip


This algorithm is correct, because the new $\g$ is indeed a Lie
algebra: the bracket of two elements with (left) eigenvector $e_0$ has
$e_0$ in its (left) kernel.  For any $S\in H$, the first row of $S$
is~$e_0$.  Hence $e_0$ is a (left) eigenvector of any element in~$\t$
and $\t$ is contained not only in $\tilde{\g}$ but also in~$\g$.
Everything else follows exactly as for
Algorithm~\ref{alg:decideToric}.

\section{Application to Gaussian graphical models}
\label{sec:gaussGM}

A Gaussian graphical model consists of covariance matrices
(parametrizing centered multivariate normal distributions) with a
given zero pattern of inverse
covariance~\cite[Chapter~5]{lauritzen96}.  Specifically, to an
undirected graph $G=(V,E)$ with vertices $V=\{1,\dots,p\}$ and edges
$E$ the Gaussian graphical model is
\[
  \mathcal{M}(G)= \lbrace \Sigma\in\Sym_n^+\colon (\Sigma^{-1})_{ij}=0
  \text{ for all } ij\notin E \rbrace
\]
where $\Sym_n^+$ denotes the real symmetric, positive definite
$n\times n$-matrices.  The resulting constraints on $\Sigma$ are
polynomials and thus $\mathcal{M}(G)$ is the intersection of an
algebraic variety with~$\Sym_n^+$.  For a more detailed introduction
see e.g.\ \cite{uhler2017}.

In applications it can be useful to know the vanishing ideal of the
Zariski-closure $\overline{\mathcal{M}(G)}$.  These vanishing ideals
are complicated and somewhat mysterious.  It seems that there exists
no combinatorial rule to derive ideal generators from the graph.  In
practice, they can be computed, for example, by examining the minimal
primes of the ideal generated by entries of $\Sigma^{-1}$.  It is
known that there is a unique minimal prime $\mathfrak{p}$ such that
$\vc(\mathfrak{p})\cap\Sym_n^+=\mathcal{M}(G)$ and this can be
computed by saturating at all principal minors.  An interesting
question related to our question here is, which vanishing ideals are
toric without a \cc.  In~\cite{misra2021gaussian} it is shown that all
vanishing ideals that are generated by linear and quadratic
polynomials are toric.  They arise from graphs that are $1$-clique
sums of complete graphs.  It would be interesting to characterize
which vanishing ideals are toric after a \cc.

We applied Algorithm~\ref{alg:decideToric} to Gaussian graphical
models of small graphs.  In the case $p=4$, this yields computations
in a polynomial ring with 10 variables for the 10 entries of a
symmetric $4\times 4$-matrix.  There are 5 connected and not complete
graphs on $4$ vertices up to isomorphism.  The numerical data for the
vanishing ideals of the corresponding graphical models is as follows.
\begin{center}
\begin{tabular}{c|c|c|c|c}
  graph	& dim model & dim Lie algebra & dim max tori & can be made toric\\
  \hline
  diamond $\graph4{Eij,Eik,Eil,Ejk,Ekl}$ & 9 & 30 	& 6 & no\\ \hline
  paw $\graph4{Eij,Eik,Eil,Ejk}$ & 8 & 52 	& 8 & yes\\ \hline
  cycle $\graph4{Eij,Ejk,Ekl,Eil}$ & 8 & 4 	& 4 & no\\ \hline
  claw $\graph4{Eij,Eik,Eil}$ & 7 & 37 	& 7 & yes\\ \hline
  path $\graph4{Eij,Ejk,Ekl}$ & 7 & 33 	& 7 & yes
\end{tabular}
\end{center}
In each case the Cartan subalgebra was already simultaneously
diagonalizable and the computations finished within few seconds of
single-core computation.

For the cycle it has been shown in \cite{mp2023} that no \cc makes it
toric.  For the diamond graph the dimension criterion of \cite{mp2023}
does not apply, since the Lie algebra is sufficiently large.  However,
the dimension of a maximal torus is too small.  All models that can be
made toric are already toric and do not need a \cc.  We included them
here to show Lie algebras of larger dimension.

For graphs on $p=5$ vertices, the polynomial ring has $15$ variables
and the computation of vanishing ideals of graphical models becomes
challenging, but does finish.  As an example, consider the ``diamond
with an extra edge''$\!\!\!$ \raisebox{-1pt}{
  \begin{tikzpicture}[scale=0.26]
  \tikzset{every node/.style={draw,shape=circle,fill=black,minimum size=2pt,inner sep=0}}
  \tikzset{every edge/.style={line width=1pt}}

    \node (N1) at (0, 1) {};
    \node (N2) at (1, 1) {};
    \node (N3) at (1, 0) {};
    \node (N4) at (0, 0) {};
    \node (N5) at (2, 0) {};  

    \path (N1) edge[draw] (N2);
    \path (N2) edge[draw] (N3);
    \path (N3) edge[draw] (N4);
    \path (N4) edge[draw] (N1);
    \path (N1) edge[draw] (N3);
    \path (N3) edge[draw] (N5);
\end{tikzpicture}}.
%
The vanishing ideal of the Zariski closure of the model
$\mathcal{M}(G)$ is (in some labeling) generated by the six quadrics
and two cubics:\vspace{-1.5ex}
\begin{gather*}
 x_{7} x_{14}  -x_{8} x_{13}, \quad
 x_{6} x_{14}  -x_{8} x_{11}, \quad
 x_{6} x_{13}  -x_{7} x_{11}, \\
x_{1} x_{14}   -x_{4} x_{8}, \quad
x_{1} x_{13}   -x_{4} x_{7}, \quad
x_{1} x_{11}   -x_{4} x_{6}, \\
  x_{3} x_{4} x_{11} - x_{4}^{2} x_{10} + x_{2} x_{4} x_{13} - x_{0} x_{11} x_{13} - x_{2} x_{3} x_{14} + x_{0} x_{10} x_{14},\\
  x_{2} x_{3} x_{8} - x_{3} x_{4} x_{6} - x_{2} x_{4} x_{7}  + x_{1} x_{4} x_{10} - x_{0} x_{8} x_{10} + x_{0} x_{7} x_{11}.
\end{gather*}
We find that the dimension of the model is~11 and the dimension of the
Lie algebra is~56.  We find a Cartan algebra of dimension 8 which is
simultaneously diagonalizable.  Hence, maximal tori have dimension 8
and thus this variety is not toric after any \cc.  The vector space
dimension of the degree 3 part of the polynomial ring is~$680$.  Hence
the computations take place in the Lie algebra of $\GL_{15}$ which has
dimension~$225$.  Nevertheless, these computations finish quickly.

We tested all Gaussian graphical models on $p=5$ vertices and none of
them becomes toric after a \cc if it was not toric already.
For $p=6$ vertices, we did not compute the actual vanishing ideals, as
this is often prohibitive.  Instead we consider the conditional
independence ideal of the graph, which can be written down easily and
one of whose minimal primes is the vanishing ideal.  Again, none of
these conditional independence ideals becomes toric after a \cc if it
was not toric already.
Even in the case of $p=7$ vertices computing Lie algebras is possible.
However, one has to do more work by hand, as it is not always possible
to compute a \gb which simplifies computation of the dimension.  As we
do not expect different outcomes for larger $p$ we did not embark on
this journey.

\bibliographystyle{amsplain}
\bibliography{refs}

\bigskip \medskip

\noindent
\footnotesize {\bf Authors' addresses:}

\noindent Thomas Kahle, OvGU Magdeburg, Germany,
{\tt thomas.kahle@ovgu.de}

\noindent Julian Vill, OvGU Magdeburg, Germany,
{\tt julian.vill@ovgu.de}

\end{document}